 \newtheorem{theorem}{Theorem}[section]
 \newtheorem{corollary}[theorem]{Corollary}
 \newtheorem{proposition}[theorem]{Proposition}
 \theoremstyle{definition}
 \newtheorem{definition}[theorem]{Definition}
 \theoremstyle{remark}
 \newtheorem{rem}[theorem]{Remark}
 \numberwithin{equation}{section}
\def\d{{\rm d}}
\def\R{{\mathbb R}}
\DeclareMathOperator\arccot{arccot}
\begin{document}

\title[On the higher derivates of arctan]
 {On the higher derivates of arctan}

%----------Author 1
\author[Oliver Deiser]{Oliver Deiser}

\address{Zentrum Mathematik and School of Education\\
Technische Universit\"at M\"unchen\\
80290 M\"unchen, Germany}

\email{deiser@tum.de}

%\thanks{This work was completed with the support of our
%\TeX-pert.}
%----------Author 2
\author{Caroline Lasser}
\address{Zentrum Mathematik\\
Technische Universit\"at M\"unchen\\
80290 M\"unchen, Germany}
\email{classer@ma.tum.de}
%----------classification, keywords, date
\subjclass{Primary 26C05; Secondary  26A09}

\keywords{inverse tangent function; Chebyshev polynomials; trigonometric expansions; Appell polynomials}

\date{\today}
%----------additions
%\dedicatory{To my boss}
%%% ----------------------------------------------------------------------

\begin{abstract}
We give a rational closed form expression for the higher derivatives of the inverse tangent
function and discuss its relation to Chebyshev polynomials, trigonometric expansions and Appell sequences of polynomials.
\end{abstract}

%%% ----------------------------------------------------------------------
\maketitle
%%% ----------------------------------------------------------------------
%\tableofcontents
\section{Introduction}

The higher derivatives of the inverse tangent function are rational functions, 
\[
\arctan^{(n)}(x) = (n-1)! \;\frac{q_{n-1}(x)}{(1+x^2)^n},\qquad n\ge 1,
\]
defining a polynomial sequence $(q_n)_{n\ge0}$ with integer coefficients. These polynomials have scattered appearances in the literature. 
They have been described in terms of Chebyshev polynomials of the first and second kind in the handbook entry 
\cite[Chapter 1.1.7]{Bry} as well as in terms of 
trigonometric functions in \cite[Theorem~1]{AdLa}, see also a sign correction in \cite[Theorem~1]{Lam}. 
A monomial expansion, whose coefficients are specified up to degree five, is given in \cite[Formula~9.3.1]{Kon}.

\smallskip
Our note aims at a unified exposition in establishing the simple closed-form representation
\[
q_n(x) = (-1)^n \sum_{k\,\text{even}, \,0\le k\le n} \binom{n+1}{k+1} (-1)^{k/2} x^{n-k},
\]
relating it to the known expressions in terms of Chebyshev polynomials and trigonometric functions, and proving 
that $(q_n)_{n\ge0}$ forms an Appell sequence if suitably normalized. We 
introduce the arctan polynomials using elementary real analysis based on Pascal's triangle and a three-term 
recurrence relation~(\S\ref{sec:alg} and \S\ref{sec:arctan}). Then, we discuss the polynomials' relation to the Chebyshev 
polynomials (\S\ref{sec:orth}), to Appell sequences (\S\ref{sec:app}) and to trigonometric expansions (\S\ref{sec:trig}). 
In Appendix~\ref{appendix}, we derive them by a partial fraction decomposition.

%Our note answers the question whether the rational
%functions, that are defined by the higher derivatives of the inverse tangent function
%have a simple closed-form representation. Using elementary real analysis, we find 
%We relate these polynomials to established representations of the inverse tangent derivatives in terms 
%of Chebyshev polynomials and trigonometric functions.
%The monic version of the arctan polynomials $(q_n)_{n\ge0}$ are an Appell sequence one might consider as a candidate 
%for the canon of noteworthy Appell polynomials.

\section{Algebraic polynomials}\label{sec:alg}
We explicitly calculate the first six derivatives:
\begin{align*}
\arctan'(x) &= \frac{1}{1+x^2},\\*[1ex]
\arctan''(x) &= \frac{-2x}{(1+x^2)^2},\\*[1ex]
\arctan'''(x) &= \frac{2(3x^2-1)}{(1+x^2)^3},\\*[1ex]
%\end{align*}
%\begin{align*}
\arctan^{(4)}(x) &= \frac{6(-4x^3+4x)}{(1+x^2)^4},\\*[1ex]
\arctan^{(5)}(x) &= \frac{24(5 x^4 - 10x^2 + 1)}{(1+x^2)^5},\\*[1ex]
\arctan^{(6)}(x) &= \frac{120(-6 x^5 + 20 x^3 - 6x)}{(1+x^2)^6}.
\end{align*}
We observe fractions with polynomials in the numerator and powers of the sum $1+x^2$ in the denumerator. Up to a factor $(n-1)!$ 
and alternating signs, the coefficients of these polynomials appear in Pascal's triangle, see Figure~\ref{pascal}. 

\begin{table}[h]
\begin{tabular}{ccccccccccccccccccc}
   & & & & & & & & 1\\
   & & & & & & & 1 & & \textcolor{red}{\textcircled{1}}\\
   & & & & & & 1 & & \textcolor{blue}{\textcircled{2}} & & 1 \\
   & & & & & 1 & & \textcolor{red}{\textcircled{3}} & & 3 & & \textcolor{blue}{\textcircled{1}}\\
   & & & & 1 & & \textcolor{blue}{\textcircled{4}} & & 6 & & \textcolor{red}{\textcircled{4}} & & 1\\
   & & & 1 &  & \textcolor{red}{\textcircled{5}} & & 10 & & \textcolor{blue}{\textcircled{10}} & & 5 & & \textcolor{red}{\textcircled{1}}\\
   & &  1 &  & \textcolor{blue}{\textcircled{6}} & &  15 & & \textcolor{red}{\textcircled{20}} & & 15 & & \textcolor{blue}{\textcircled{6}} & & 1\\*[2ex]
%   & 1 & & \textcolor{red}{\textcircled{7}} & & 21 & & \textcolor{blue}{\textcircled{35}} & & 35 & & \textcolor{red}{\textcircled{21}} & & 7 & & \textcolor{blue}%{\textcircled{1}}\\
%   1 & &\textcolor{blue}{\textcircled{8}} & & 28 & & \textcolor{red}{\textcircled{56}} & & 70 & & \textcolor{blue}{\textcircled{56}} & & 28 & & \textcolor{red}%{\textcircled{8}} & & 1  \\*[2ex]
\end{tabular}
\caption{\label{pascal} Pascal's triangle. The encircled coloured  numbers correspond to the coefficients of the six polynomials up to multiplicative factors of the size $(n-1)!$ and alternating signs. \textcolor{red}{Red color} indicates a positive sign, 
\textcolor{blue}{blue color} a negative one.}
\end{table}

This motivates the following definition:

\begin{definition} We define a sequence $(q_n)_{n\ge0}$ of polynomials 
$q_n:\R\to\R$ by
\[
q_n(x) = (-1)^n \sum_{k\,\text{even}, \,0\le k\le n} \binom{n+1}{k+1} (-1)^{k/2} x^{n-k}.
\]
\end{definition}

Before establishing the relation of  these polynomials to the higher derivatives of the inverse tangent function, we collect some properties.

\begin{proposition}[Basic properties]\label{prop} $\, $

\begin{enumerate}
\setlength{\itemsep}{1ex}
\item $q_n$ is a polynomial of degree $n$ for each $n\ge0$.
\item $q_n(-x) = (-1)^n q_n(x)$ for each $n\ge 0$.
\item $q_n(0) = 0$ for $n$ odd and $q_n(0) = (-1)^{n/2}$ for $n$ even. 
\item $q_n' = -(n+1) q_{n-1}$ for each $n\ge 1$.
\item $q_n(x) + 2x q_{n-1}(x) + (1+x^2) q_{n-2}(x) = 0$ for each $n\ge 2$.
\end{enumerate}
\end{proposition}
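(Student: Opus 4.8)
The plan is to read each of the five statements directly off the defining sum, reserving the real work for the three-term recurrence~(5).

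For~(1), I would isolate the $k=0$ term, which contributes the leading monomial $(-1)^n\binom{n+1}{1}x^n=(-1)^n(n+1)x^n$; since this coefficient never vanishes and every other term has strictly smaller degree, $q_n$ has degree exactly $n$. For~(2), I would substitute $-x$ and use that $(-x)^{n-k}=(-1)^{n-k}x^{n-k}=(-1)^n x^{n-k}$ whenever $k$ is even, so that the substitution merely pulls out a single global factor $(-1)^n$. For~(3), I would observe that at $x=0$ only a term with $n-k=0$ can survive; such an \emph{even} index $k=n$ is available precisely when $n$ is even, in which case the surviving term is $(-1)^n(-1)^{n/2}=(-1)^{n/2}$, whereas for odd $n$ the relevant term is absent and $q_n(0)=0$.

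For~(4), I would differentiate the defining sum term by term, which produces a factor $(n-k)$ in front of $x^{n-k-1}$, and then compare with the sum defining $q_{n-1}$. The statement then reduces to the elementary identity
\[
(n-k)\binom{n+1}{k+1}=(n+1)\binom{n}{k+1},
\]
valid for $0\le k\le n$, once one checks that the $k=n$ term (present only for even $n$) is annihilated by differentiation, so that the two summation ranges coincide.

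The main obstacle is~(5). Here I would extract the coefficient $c_n(m):=[x^m]q_n(x)$, which equals $(-1)^n(-1)^{(n-m)/2}\binom{n+1}{m}$ when $n\equiv m\pmod 2$ and vanishes otherwise; in particular the three polynomials appearing in the relation carry nonzero coefficients only for the same parity of $m$. Matching the coefficient of $x^m$ in $q_n(x)+2xq_{n-1}(x)+(1+x^2)q_{n-2}(x)$ and factoring out the common sign, the claim collapses to
\[
\binom{n+1}{m}-2\binom{n}{m-1}+\binom{n-1}{m-2}-\binom{n-1}{m}=0,
\]
which I would settle by applying Pascal's rule twice to rewrite $\binom{n+1}{m}$ and $\binom{n}{m-1}$ through the entries $\binom{n-1}{\cdot}$ and cancelling. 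I expect the only delicate points to be the bookkeeping of the two sign factors and the behaviour of the terms at the ends of the summation range; the combinatorial heart of the argument is a short telescoping of Pascal's rule, which is also what underlies the pattern recorded in Figure~\ref{pascal}.
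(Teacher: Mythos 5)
Your proposal is correct, and on the one item where the paper does real work---the recurrence (5)---you take a genuinely different route. The paper proves (5) by induction on $n$: differentiating $q_n(x)+2xq_{n-1}(x)+(1+x^2)q_{n-2}(x)$ and invoking item (4) shows this derivative equals $(1-n)$ times the same expression with $n$ replaced by $n-1$, which vanishes by the induction hypothesis; the expression is therefore constant, and evaluating at $x=0$ shows the constant is zero. Your proof instead extracts coefficients: writing $c_n(m)=(-1)^n(-1)^{(n-m)/2}\binom{n+1}{m}$ for $n\equiv m\pmod 2$ (and $0$ otherwise), the relation reduces, after factoring out the common sign $(-1)^n(-1)^{(n-m)/2}$, to exactly the identity you state,
\[
\binom{n+1}{m}-2\binom{n}{m-1}+\binom{n-1}{m-2}-\binom{n-1}{m}=0,
\]
which does telescope under two applications of Pascal's rule: splitting $\binom{n+1}{m}=\binom{n}{m}+\binom{n}{m-1}$ leaves $\binom{n}{m}-\binom{n}{m-1}+\binom{n-1}{m-2}-\binom{n-1}{m}$, and splitting $\binom{n}{m}$ and $\binom{n}{m-1}$ once more cancels everything, with the convention $\binom{a}{b}=0$ for $b<0$ or $b>a$ handling the boundary values of $m$ uniformly. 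Your route is self-contained and purely combinatorial: it needs neither induction nor items (3)--(4), and it makes precise the Pascal's-triangle pattern that motivated the definition in the first place, at the cost of some sign and parity bookkeeping. The paper's route is shorter and exhibits the derivative identity (4) as the structural reason behind the recurrence, but it silently relies on the earlier items (which the paper dismisses as ``immediate''). Your verifications of (1)--(4), in particular the identity $(n-k)\binom{n+1}{k+1}=(n+1)\binom{n}{k+1}$ and the observation that the $k=n$ term (present only for even $n$) is killed by differentiation so that the summation ranges match, are precisely the checks the paper compresses into that one word.
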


\begin{proof}
The first four properties are immediate. 
%To prove (4), we calculate the derivative:
%\begin{align*}
%q_n'(x) 
%&= (-1)^n \sum_{k\,\text{even},\,0\le k\le n-1} \binom{n+1}{k+1} (-1)^{k/2} (n-k) x^{n-k-1}\\
%&= -(-1)^{n-1} (n+1) \sum_{k\,\text{even},\,0\le k\le n-1} \binom{n}{k+1} (-1)^{k/2}  x^{n-k-1}\\
%&= -(n+1)q_{n-1}(x).
%\end{align*}
For proving (5), we argue by induction. We check that 
\[
q_2(x) + 2x q_1(x) + (1+x^2)q_0(x)= 0.
\] 
Assuming that the relation holds for $n-1$, we obtain
\begin{align*}
&\frac{\d}{\d x} \left( q_n(x) + 2xq_{n-1}(x) + (1+x^2) q_{n-2}(x) \right)\\
& = q_n'(x) + 2 q_{n-1}(x) + 2xq_{n-1}'(x) + 2x q_{n-2}(x) + (1+x^2)q_{n-2}'(x) \\
&= (1-n) \left( q_{n-1}(x) + 2x q_{n-2}(x) + (1+x^2) q_{n-3}(x)\right)\\
&= 0.
\end{align*}
Hence, the polynomial 
\[
q_n(x) + 2xq_{n-1}(x) + (1+x^2) q_{n-2}(x)
\] 
is constant. By the second property, it evaluates to zero for $x=0$. Therefore it is the zero polynomial.
\end{proof}

\section{Arctan function}\label{sec:arctan}
We are now ready to use the polynomial sequence $(q_n)_{n\ge0}$ for describing the derivatives of the inverse tangent function.

\begin{theorem}[Derivatives of arctan]\label{theo}
\[
\arctan^{(n)}(x) = (n-1)! \;\frac{q_{n-1}(x)}{(1+x^2)^n},\qquad n\ge 1.
\]
\end{theorem}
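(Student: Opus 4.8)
The plan is to prove the theorem by induction on $n$, using the differentiation recurrence from Proposition~\ref{prop}(4). The base case $n=1$ just restates $\arctan'(x)=1/(1+x^2)$, and since $q_0(x)=1$, the formula reads $\arctan'(x)=0!\cdot q_0(x)/(1+x^2)^1$, which holds.

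\smallskip

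For the inductive step I would assume the claimed formula holds at level $n$ and differentiate both sides. On the left I simply get $\arctan^{(n+1)}(x)$. On the right I differentiate $(n-1)!\,q_{n-1}(x)(1+x^2)^{-n}$ using the quotient (or product) rule: the numerator contributes $q_{n-1}'(x)(1+x^2)^{-n}$ and the denominator contributes $-n\,q_{n-1}(x)\cdot 2x\,(1+x^2)^{-n-1}$. After clearing a common factor of $(1+x^2)^{-n-1}$, the bracketed numerator becomes
\[
(n-1)!\left[(1+x^2)q_{n-1}'(x) - 2nx\,q_{n-1}(x)\right].
\]
The goal is to show this equals $n!\,q_n(x)$, i.e. that $(1+x^2)q_{n-1}'(x)-2nx\,q_{n-1}(x)=n\,q_n(x)$.

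\smallskip

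The heart of the argument is therefore the polynomial identity
\[
(1+x^2)\,q_{n-1}'(x) - 2nx\,q_{n-1}(x) = n\,q_n(x),
\]
and I expect this to be the main obstacle. The clean way to dispatch it is to feed in Proposition~\ref{prop}(4): since $q_{n-1}'=-n\,q_{n-2}$, the left-hand side rewrites as $-n(1+x^2)q_{n-2}(x)-2nx\,q_{n-1}(x)=-n\bigl[(1+x^2)q_{n-2}(x)+2x\,q_{n-1}(x)\bigr]$. By the three-term recurrence in Proposition~\ref{prop}(5), the bracket equals $-q_n(x)$, so the whole expression collapses to $n\,q_n(x)$, exactly as required. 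Thus the two earlier properties (4) and (5) combine to carry the induction through without any direct manipulation of binomial coefficients.

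\smallskip

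An attractive alternative is to avoid induction entirely: repeatedly differentiating $\arctan'(x)=(1+x^2)^{-1}$ via the Leibniz or Faà di Bruno route leads to the same recurrence structure, but routing everything through the already-established properties of $(q_n)$ keeps the bookkeeping minimal. I would present the inductive version, since it isolates the one genuine computation — verifying the bridge identity above — and lets Proposition~\ref{prop} do all the remaining work.
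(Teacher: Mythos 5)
Your proposal is correct and follows essentially the same route as the paper: induction on $n$, differentiation of the right-hand side, then substitution of Proposition~\ref{prop}(4) to replace $q_{n-1}'$ by $-n\,q_{n-2}$ and of the three-term recurrence in Proposition~\ref{prop}(5) to collapse the resulting bracket to $q_n(x)$. Your bridge identity $(1+x^2)q_{n-1}'(x)-2nx\,q_{n-1}(x)=n\,q_n(x)$ is exactly the computation the paper carries out inline.
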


\begin{proof} We argue by induction on $n$. For $n=1$, the formula reduces to the first derivative of the $\arctan$. 
For the inductive step, we calcuate
\begin{align*}
\arctan^{(n+1)}(x) 
&= (n-1)! \;\frac{q_{n-1}'(x) (1+x^2)^n - 2n \,x\, q_{n-1}(x) (1+x^2)^{n-1} }{(1+x^2)^{2n}} \\
&= (n-1)! \;\frac{-n q_{n-2}(x) (1+x^2) - 2n\, x\, q_{n-1}(x)}{(1+x^2)^{n+1}}\\
&= n! \;\frac{q_n(x)}{(1+x^2)},
\end{align*}
where the last two equations rely on Proposition~\ref{prop}.
\end{proof}

In Appendix~\ref{appendix} we give another proof of Theorem~\ref{theo} using a partial fraction 
decomposition of the rational function $1/(1+x^2)$. 

\begin{rem}[Maclaurin series]
The Maclaurin series of the inverse tangent function,
\[
\arctan(x) =  \sum_{n\ge 0} (-1)^n \frac{x^{2n+1}}{2n + 1},
\]
is usually derived via a geometric series argument. Theorem~\ref{theo} together with the third property of 
Proposition~\ref{prop} provides an alternative derivation. 
\end{rem}

\section{Orthogonal polynomials}\label{sec:orth}
Sequences of real polynomials $(\pi_n)_{ n\ge0}$ that are orthogonal to each other with respect to an inner product 
satisfy a three-term recurrence relation of the form
\[
\pi_n(x) - (\alpha_n x-\beta_n)\pi_{n-1}(x) + \gamma_n \pi_{n-2}(x) = 0,\qquad n\ge2,
\]
where $(\alpha_n)$, $(\beta_n)$ and $(\gamma_n)$ are sequences of real numbers, see \cite[Theorem~3.2.1]{Sze}. Hence, the arctan 
polynomials $(q_n)_{n\ge0}$ are not orthogonal, since for all real numbers $\alpha,\beta,\gamma\in\R$,
\begin{align*}
(\alpha x-\beta)q_3(x) -\gamma q_2(x) &=(\alpha x-\beta)(-4x^3+4x) - \gamma (3x^2 - 1)\\
&\neq 5x^4 - 10x^2 + 1 \\
&=q_4(x).
\end{align*}
However, there is a known relation of inverse tangent derivatives to orthogonal polynomials, namely the Chebyshev polynomials 
of the first and second kind. 

\begin{proposition}[Chebyshev polynomials]\label{cheb}
\begin{align*}
q_{2n}(x) &= (-1)^n (1+x^2)^{n+1/2} \, T_{2n+1}\!\left(\frac{1}{\sqrt{1+x^2}}\right),\qquad n\ge0,\\
q_{2n-1}(x) &= (-1)^n (1+x^2)^{n-1/2} \, x\;  U_{2n-1}\!\left(\frac{1}{\sqrt{1+x^2}}\right),\qquad n\ge 1,
\end{align*}
where $(T_n)_{n\ge0}$ and $(U_n)_{n\ge0}$ denote the $n$th Chebyshev polynomial of the first and second kind, respectively.
\end{proposition}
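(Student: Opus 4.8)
The plan is to reduce both identities to a single trigonometric statement via the substitution $x=\tan\theta$ and then to verify that statement against the three-term recurrence of Proposition~\ref{prop}(5).

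First I would fix $\theta\in(-\pi/2,\pi/2)$ and set $x=\tan\theta$; then $\tan$ maps bijectively onto $\R$, one has $\cos\theta>0$, and consequently $1/\sqrt{1+x^2}=\cos\theta$ and $\sqrt{1+x^2}=\sec\theta$. Using the defining relations $T_k(\cos\theta)=\cos(k\theta)$ and $U_k(\cos\theta)=\sin((k+1)\theta)/\sin\theta$, together with $x/\sin\theta=\sec\theta$, the two claimed identities simplify respectively to
\[
q_{2n}(\tan\theta)=(-1)^n\sec^{2n+1}\theta\,\cos((2n+1)\theta),\qquad
q_{2n-1}(\tan\theta)=(-1)^n\sec^{2n}\theta\,\sin(2n\theta).
\]
Both are special cases of the unified claim
\begin{equation*}
q_m(\tan\theta)=(-1)^m\sec^{m+1}\theta\,\sin\!\big((m+1)(\tfrac\pi2-\theta)\big),\qquad m\ge0, \tag{$\ast$}
\end{equation*}
as one sees by writing $\sin((2n+1)(\tfrac\pi2-\theta))=(-1)^n\cos((2n+1)\theta)$ and $\sin(2n(\tfrac\pi2-\theta))=(-1)^{n+1}\sin(2n\theta)$.

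The point of passing to $(\ast)$ is that the recurrence of Proposition~\ref{prop}(5) mixes parities, so the $T$-formula and the $U$-formula cannot be proved separately; they have to be established simultaneously. I would prove $(\ast)$ by induction on $m$. After checking $m=0,1$ directly, let $g_m(\theta)$ denote the right-hand side of $(\ast)$. Since $1+x^2=\sec^2\theta$, Proposition~\ref{prop}(5) says that $q_m(\tan\theta)$ satisfies $g_m+2\tan\theta\,g_{m-1}+\sec^2\theta\,g_{m-2}=0$; substituting the definition of $g_m$, dividing by $(-1)^m\sec^{m+1}\theta$, and writing $\phi=\tfrac\pi2-\theta$ (so that $\cos\phi=\sin\theta$) reduces this to
\[
\sin((m+1)\phi)+\sin((m-1)\phi)-2\sin\theta\,\sin(m\phi)=0,
\]
which is exactly the sum-to-product identity $\sin((m+1)\phi)+\sin((m-1)\phi)=2\cos\phi\,\sin(m\phi)$. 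Hence $g_m$ and $q_m(\tan\theta)$ obey the same second-order recurrence with the same two initial values, so they agree for all $\theta\in(-\pi/2,\pi/2)$, i.e.\ for every $x\in\R$.

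I expect the main difficulty to be bookkeeping rather than conceptual: keeping the parity-dependent signs straight when splitting $(\ast)$ into the $T$- and $U$-cases, and confirming that the branch choices cause no trouble (here $\cos\theta>0$ pins down $\sec\theta=+\sqrt{1+x^2}$, and the apparent singularity of $\sin(2n\theta)/\sin\theta$ at $\theta=0$ is removable, matching the polynomial $U_{2n-1}$). It is also worth remarking, to justify that these are genuine polynomial identities, that $T_{2n+1}$ and $U_{2n-1}$ are \emph{odd} polynomials, so the half-integer powers of $1+x^2$ and the extra factor $x$ combine to cancel all square roots, leaving polynomials in $x$ on both sides.
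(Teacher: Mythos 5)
Your proof is correct, but it takes a genuinely different route from the paper. The paper argues algebraically: it inserts $1/\sqrt{1+x^2}$ into the explicit expansion $T_n(x)=\sum_{k}\binom{n}{2k}(x^2-1)^k x^{n-2k}$, and matches the resulting coefficient sum against the defining sum for $q_{2n}$, treating only the even case and declaring the odd case analogous. You instead substitute $x=\tan\theta$, invoke the trigonometric characterizations $T_k(\cos\theta)=\cos(k\theta)$ and $U_k(\cos\theta)=\sin((k+1)\theta)/\sin\theta$, merge the two claims into the single identity $(\ast)$, and verify $(\ast)$ by induction using the three-term recurrence of Proposition~\ref{prop}(5) together with the sum-to-product formula; I checked the parity-dependent sign reductions and the inductive step, and they are right. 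Worth noting: your unified claim $(\ast)$ is exactly Theorem~\ref{trig} of the paper in disguise, since for $\theta\in(-\pi/2,\pi/2)$ one has $\arccot(\tan\theta)=\tfrac\pi2-\theta$, so $(\ast)$ reads $\sin\bigl((m+1)\arccot x\bigr)=(-1)^m q_m(x)/(1+x^2)^{(m+1)/2}$. Your argument therefore amounts to an alternative, recurrence-based (rather than de Moivre-based) proof of the trigonometric representation, from which the Chebyshev formulas follow --- a nice economy: one induction delivers both Proposition~\ref{cheb} and Theorem~\ref{trig}, handles both parities simultaneously, and avoids coefficient bookkeeping entirely. Conversely, what the paper's route buys is a direct, induction-free, purely algebraic verification straight from the definition of $q_n$. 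One small overstatement on your side: the claim that the $T$- and $U$-formulas ``cannot be proved separately'' is true only within your strategy (the recurrence does mix parities); the paper's coefficient comparison proves each parity on its own.
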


\begin{proof}
We just verify the formula for the even degree arctan polynomials, since the proof for the odd ones is analogous. 
Using that
\[
T_n(x) = \sum_{k=0}^{\lfloor \frac{n}{2}\rfloor} \binom{n}{2k} (x^2-1)^k x^{n-2k},
\]
we obtain
\[
 T_{2n+1}\!\left(\frac{1}{\sqrt{1+x^2}}\right) 
=  (1+x^2)^{-n-\frac12}  \sum_{k=0}^{n} \binom{2n+1}{2k} (-1)^k x^{2k}.
\]
Therefore,
\begin{align*}
q_{2n}(x) &= \sum_{k\,\text{even}, \,0\le k\le 2n} \binom{2n+1}{k+1} (-1)^{k/2} x^{2n-k}\\
&= (-1)^n \sum_{m=0}^{n} \binom{2n}{2m+1} (-1)^{m} x^{2m}\\
&= (-1)^n (1+x^2)^{n+1/2} \, T_{2n+1}\!\left(\frac{1}{\sqrt{1+x^2}}\right).
\end{align*}
\end{proof}

The two formulas of the following Corollary~\ref{cor} are given in \cite[Chapter 1.1.7]{Bry}. 
They are implied by Theorem~\ref{theo} and Proposition~\ref{cheb}.

\begin{corollary}[Derivatives of arctan]\label{cor}
\begin{align*}
\arctan^{(2n+1)}(x) &= \frac{ (-1)^n (2n)!}{(1+x^2)^{n+1/2}} \,  T_{2n+1}\!\left(\frac{1}{\sqrt{1+x^2}}\right),\qquad n\ge 0,\\*[1ex]
\arctan^{(2n)}(x) &= \frac{ (-1)^n (2n-1)!}{(1+x^2)^{n+1/2}} \, x\;  U_{2n-1}\!\left(\frac{1}{\sqrt{1+x^2}}\right),\qquad n\ge 1.
\end{align*}
\end{corollary}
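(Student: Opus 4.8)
The plan is to obtain both identities by direct substitution, feeding the Chebyshev formulas of Proposition~\ref{cheb} into the derivative formula of Theorem~\ref{theo}, and handling the two parities separately. No new ideas are required; the statement is flagged in the text as being implied by the two preceding results, so the work is purely to combine them and reconcile the exponents.

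For the odd-order derivatives I would first apply Theorem~\ref{theo} at index $2n+1$ to write
\[
\arctan^{(2n+1)}(x) = (2n)!\,\frac{q_{2n}(x)}{(1+x^2)^{2n+1}},\qquad n\ge 0.
\]
I would then insert the even-degree identity $q_{2n}(x) = (-1)^n (1+x^2)^{n+1/2}\, T_{2n+1}(1/\sqrt{1+x^2})$ from Proposition~\ref{cheb} and cancel the powers of $1+x^2$. The net exponent in the denominator is $(2n+1)-(n+1/2)=n+1/2$, which reproduces the first claimed formula together with the factor $(-1)^n (2n)!$.

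The even-order case is completely parallel. Theorem~\ref{theo} at index $2n$ gives
\[
\arctan^{(2n)}(x) = (2n-1)!\,\frac{q_{2n-1}(x)}{(1+x^2)^{2n}},\qquad n\ge 1,
\]
and substituting $q_{2n-1}(x) = (-1)^n (1+x^2)^{n-1/2}\, x\, U_{2n-1}(1/\sqrt{1+x^2})$ again leaves the denominator exponent $2n-(n-1/2)=n+1/2$, yielding the second formula.

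There is no genuine obstacle here: the only points demanding care are the bookkeeping of the half-integer powers of $1+x^2$ and the shift between the two index conventions, since Theorem~\ref{theo} is indexed by the order of the derivative while Proposition~\ref{cheb} is indexed by the degree of the polynomial $q$. Once these are matched, both equalities drop out immediately.
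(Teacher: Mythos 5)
Your proposal is correct and follows exactly the route the paper intends: the text states that Corollary~\ref{cor} is implied by Theorem~\ref{theo} and Proposition~\ref{cheb}, and your substitution with the exponent bookkeeping $(2n+1)-(n+1/2)=n+1/2$ and $2n-(n-1/2)=n+1/2$ is precisely that implication, spelled out. Nothing is missing.
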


\section{Appell polynomials}\label{sec:app}

We normalize the polynomial sequence $(q_n)_{n\ge0}$ to obtain monic polynomials
\[
p_n := \frac{(-1)^n}{n+1} q_n,\qquad n\ge 0,
\]
that may also be written as
\[
p_n(x) = \sum_{k\,\text{even},\,0\le k\le n} \binom{n}{k} \frac{(-1)^{k/2}}{k+1} x^{n-k}.
\]
The derivatives of the inverse tangent function given in Theorem~\ref{theo} may be expressed in terms of these polynomials as
\[
\arctan^{(n)}(x) = (-1)^{n-1}\;n!\; \frac{p_{n-1}(x)}{(1+x^2)^n},\qquad n\ge 1.
\]
According to Proposition~2, the derivatives satisfy
\begin{align*}
p_n' &= (-1)^{n-1} q_{n-1} \\
&= n p_{n-1},\qquad\qquad n\ge 1.
\end{align*}
Hence, the monic polynomials $(p_n)_{n\ge0}$ are an example of an {\em Appell sequence}, that due to its elementary connection to the inverse tangent function might deserve addition 
to the compilations of noteworthy Appell polynomials as given in \cite{Car,Sca} or in \cite[Chapter~2.6]{Rom}.

\section{Trigonometric expansions}\label{sec:trig}

The triangular array $(\alpha_{k,n})$ of integers defined by
\[
q_n(x) = (-1)^n \sum_{k=0}^n \alpha_{n,k} \,x^{n-k},\qquad n\ge 0,
\]
also occurs in the context of trigonometric expansions, see \cite[Section~5]{Kim}. 
Examining this connection, we obtain the following trigonometric representation of the arctan polynomials.

\begin{theorem}[Trigonometric representation]\label{trig}
\[
\sin(n\arccot x) = \frac{(-1)^{n-1} \,q_{n-1}(x)}{(1+x^2)^{n/2}},\qquad n\ge1,
\]
where $\arccot(x)= \frac\pi2 - \arctan(x)$ refers to the inverse function of the cotangens restricted to $(0,\pi)$. 
\end{theorem}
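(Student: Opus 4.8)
The plan is to pass from $x$ to the angle $\theta := \arccot(x)\in(0,\pi)$ and exploit de Moivre's formula, matching the resulting binomial expansion directly against the definition of $q_{n-1}$. First I would record the elementary identities attached to this angle. Since $\cot\theta = x$ and $\sin^2\theta + \cos^2\theta = 1$, we get $\sin^2\theta\,(1+x^2) = 1$; because $\theta\in(0,\pi)$ forces $\sin\theta > 0$, this pins down
\[
\sin\theta = \frac{1}{\sqrt{1+x^2}},\qquad \cos\theta = \frac{x}{\sqrt{1+x^2}},
\]
with no sign ambiguity, and hence $e^{i\theta} = \cos\theta + i\sin\theta = (x+i)/\sqrt{1+x^2}$.

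Next I would raise this to the $n$th power. By de Moivre's formula,
\[
\cos(n\theta) + i\sin(n\theta) = e^{in\theta} = \frac{(x+i)^n}{(1+x^2)^{n/2}},
\]
so $\sin(n\theta)$ equals $(1+x^2)^{-n/2}$ times the imaginary part of $(x+i)^n$. Expanding $(x+i)^n$ by the binomial theorem and keeping the odd powers of $i$, using $i^{2m+1} = (-1)^m i$, yields
\[
\operatorname{Im}(x+i)^n = \sum_{m\ge 0}\binom{n}{2m+1}(-1)^m\,x^{n-1-2m}.
\]

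The final step is to identify this sum with $(-1)^{n-1}q_{n-1}(x)$. Substituting $k=2m$ into the defining formula for $q_{n-1}$ gives $(-1)^{n-1}q_{n-1}(x) = \sum_{m}\binom{n}{2m+1}(-1)^m x^{n-1-2m}$, which matches the imaginary part term by term (the parity restriction on $k$ and the vanishing of $\binom{n}{2m+1}$ for $2m+1>n$ make the effective ranges agree); dividing by $(1+x^2)^{n/2}$ then completes the proof. I expect the only genuine bookkeeping to lie in this last index-matching — aligning the even-$k$ restriction, the sign $(-1)^{k/2}$, and the global factor $(-1)^{n-1}$ — while the one substantive point, namely that $\sin\theta$ has a definite sign on $(0,\pi)$, is settled at the very start.

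As an alternative avoiding complex numbers, one could argue by induction on $n$: the classical recurrence $\sin((n+1)\theta) = 2\cos\theta\,\sin(n\theta) - \sin((n-1)\theta)$, after clearing the powers of $\sqrt{1+x^2}$ and the alternating signs, reduces to exactly the three-term relation $q_n + 2x\,q_{n-1} + (1+x^2)q_{n-2} = 0$ of Proposition~\ref{prop}(5), with the base cases $n=1,2$ checked directly from $q_0 = 1$ and $q_1(x) = -2x$.
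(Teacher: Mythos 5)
Your proposal is correct and takes essentially the same approach as the paper: de Moivre's formula plus the binomial theorem, combined with the identities $\sin(\arccot x)=1/\sqrt{1+x^2}$ and $\cos(\arccot x)=x/\sqrt{1+x^2}$, with the only cosmetic difference that you substitute $\theta=\arccot x$ before expanding (so you read off $\operatorname{Im}\,(x+i)^n$) rather than after. The index-matching you describe is exactly the paper's final step, so no further comment is needed.
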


\begin{proof} By de Moivre's formula and the binomial theorem, we may write
\[
\cos(nx) + i\sin(nx)  = \sum_{k=0}^n \binom{n}{k} i^k \cos^{n-k}(x) \sin^k(x).
\]
This implies the trigonometric representation
\[
\sin(nx) = \sum_{k\,\text{odd},\,0\le k\le n} \binom{n}{k} (-1)^{(k+1)/2} \cos^{n-k}(x) \sin^k(x).
\]
Since
\[
\sin(\arccot x) = \frac{1}{\sqrt{1+x^2}}\quad\text{and}\quad\cos(\arccot x) = \frac{x}{\sqrt{1+x^2}},
\]
we have
\begin{align*}
\sin(n\arccot x) 
&= \frac{1}{(1+x^2)^{n/2}} \sum_{k\,\text{odd},\,0\le k\le n} \binom{n}{k} (-1)^{(k+1)/2} x^{n-k}\\
&= \frac{(-1)^{n-1} \, q_{n-1}(x)}{(1+x^2)^{n/2}}.
\end{align*}
\end{proof}

Combining Theorem~\ref{theo} and Theorem~\ref{trig}, we obtain a second representation of 
the higher derivatives of the arctan function, that is similar to \cite[Theorem~1]{AdLa} and \cite[Theorem~1]{Lam}.

\begin{corollary}[Derivatives of arctan]
\[
\arctan^{(n)}(x) = (n-1)! \;\frac{(-1)^{n-1} \sin(n\arccot x)}{(1+x^2)^{n/2}},\qquad n\ge 1.
\]
\end{corollary}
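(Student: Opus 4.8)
The plan is to obtain this formula purely by eliminating the arctan polynomial $q_{n-1}$ between the two results already established, namely Theorem~\ref{theo} and Theorem~\ref{trig}; no new analysis is required. First I would take the trigonometric representation of Theorem~\ref{trig},
\[
\sin(n\arccot x) = \frac{(-1)^{n-1} \,q_{n-1}(x)}{(1+x^2)^{n/2}},
\]
and solve it for the polynomial. Multiplying by $(1+x^2)^{n/2}$ and dividing by $(-1)^{n-1}$ — which is its own reciprocal — gives
\[
q_{n-1}(x) = (-1)^{n-1} (1+x^2)^{n/2} \sin(n\arccot x),\qquad n\ge 1.
\]

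Next I would substitute this expression for $q_{n-1}(x)$ into the derivative formula of Theorem~\ref{theo}. The replacement turns the numerator $q_{n-1}(x)$ into $(-1)^{n-1}(1+x^2)^{n/2}\sin(n\arccot x)$, while the denominator $(1+x^2)^n$ absorbs the factor $(1+x^2)^{n/2}$, since $(1+x^2)^{n/2}/(1+x^2)^n = (1+x^2)^{-n/2}$. This yields exactly
\[
\arctan^{(n)}(x) = (n-1)!\;\frac{(-1)^{n-1}\sin(n\arccot x)}{(1+x^2)^{n/2}},
\]
as claimed.

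Because both ingredients are already proved for every $n\ge 1$, there is no genuine obstacle here: the entire content is the bookkeeping of the half-integer exponents of $1+x^2$ and of the single sign factor $(-1)^{n-1}$, which combine cleanly with no cancellation issues. The only point I would flag explicitly is the branch convention for $\arccot$ inherited from Theorem~\ref{trig}, where $\arccot(x)=\tfrac\pi2-\arctan(x)$ has range $(0,\pi)$; this is precisely the branch on which the identities $\sin(\arccot x)=1/\sqrt{1+x^2}$ and $\cos(\arccot x)=x/\sqrt{1+x^2}$ hold, so the resulting representation is valid for all real $x$.
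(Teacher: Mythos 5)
Your proposal is correct and matches the paper's own reasoning: the corollary is stated there as an immediate consequence of combining Theorem~\ref{theo} with Theorem~\ref{trig}, which is exactly the elimination of $q_{n-1}$ that you carry out. Your sign and exponent bookkeeping, including the observation that $(-1)^{n-1}$ is its own reciprocal and that $(1+x^2)^{n/2}/(1+x^2)^n=(1+x^2)^{-n/2}$, is accurate, so nothing further is needed.
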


\appendix
\section{Partial fraction decomposition}\label{appendix}
The quadratic polynomial $1+x^2$ has the complex roots $\pm i$ and thus can be factorized as $1+x^2 = (x-i)(x+i)$. This provides the partial fraction decomposition
\[
\frac{1}{1+x^2} = \frac{1}{2i} \left( \frac{1}{x-i} - \frac{1}{x+i}\right).
\]
Therefore, for $n\ge 1$, 
\begin{align*}
\arctan^{(n)}(x) & =\frac{\d^{n-1}}{\d x^{n-1}}\frac{1}{1+x^2} \\
&= (n-1)!\; \frac{(-1)^{n-1}}{2i} \left( \frac{1}{(x-i)^n} - \frac{1}{(x+i)^n}\right)\\
&= (n-1)!\;\frac{(-1)^{n-1}}{2i} \; \frac{(x+i)^{n} - (x-i)^{n}}{(1+x^2)^n}
\end{align*}
By the binomial theorem,
\begin{align*}
\frac{(-1)^{n-1}}{2i}& \left( (x+i)^n - (x-i)^n\right) = \sum_{k=0}^n \binom{n}{k} \frac{i^k - (-i)^k}{2i} x^{n-k}\\
 &= (-1)^{n-1}\sum_{k\,\text{odd},\,0\le k\le n} \binom{n}{k} i^{k-1} x^{n-k}\\
 &= (-1)^{n-1}\sum_{\ell\,\text{even},\,0\le \ell\le n-1} \binom{n}{\ell+1} (-1)^{\ell/2} x^{n-1-\ell}\\*[1ex]
 &= q_{n-1}(x).
\end{align*}

\bigskip
\subsection*{Acknowledgments}
We thank C.~Vignat and F.~Bornemann for pointing us to \cite{Bry} and \cite{AdLa,Lam}, respectively.

\bigskip


\begin{thebibliography}{99}
\bibitem[AdLa]{AdLa} K. Adegoke, O. Layeni, The higher derivatives of the inverse tangent
function and rapidly convergent BBP-type formulas for Pi, {\em Appl. Math. E-Notes}~{\bf 10}, 70--75, 2010
\bibitem[Bry]{Bry} Y. A. Brychkov, Handbook of Special Functions, Derivatives, Integrals, Series, and Other Formulas, CRC Press, 2008
\bibitem[Car]{Car} B. C. Carlson, Polynomials satisfying a binomial theorem,  
{\em J. Math. An. Appl.}~{\bf 32}, 543--558, 1970
\bibitem[Kim]{Kim} C. Kimberling, Polynomials associated with reciprocation, {\em J. Int. Seq.}~{\bf 12}, Article 09.3.4, 2009
\bibitem[Kon]{Kon} K. Kon, Higher derivative, \S9 at \url{http://fractional-calculus.com/} (2012.01.06 renewal)
\bibitem[Lam]{Lam} V. Lampret, The higher derivatives of the inverse tangent revisited, {\em Appl. Math. E-Notes}~{\bf 11}, 224--231, 2011
\bibitem[Rom]{Rom} S. Roman, The Umbral Calculus, Academic Press, 1984
\bibitem[Sca]{Sca} C. Scaravelli, Su i polinomi di Appell, {\em Riv. Mat. Univ. Parma} (2)~{\bf 6}, 103--116, 1965
\bibitem[Sze]{Sze} G. Szeg\"o, Orthogonal Polynomials, 4th edition, American Mathematical Society, 1975
\end{thebibliography}
\end{document}